\newtheorem{theo}{Theorem}
\newtheorem{prop}{Proposition}
\theoremstyle{remark}
\def\({\left(}
\def\){\right)}
\def\[{\left[}
\def\]{\right]}
\begin{document}

\title{A note on odd zeta values}
\author{Tanguy Rivoal and Wadim Zudilin}

\maketitle

\begin{abstract}
Using a new construction of rational linear forms in odd zeta values and the saddle point method, we prove 
the existence of at least two irrational numbers amongst 
the~33 odd zeta values $\zeta(5)$, $\zeta(7)$, \ldots, $\zeta(69)$.
\end{abstract}

\medskip

\hfill {\em Dedicated to Christian Krattenthaler, on his 60th birthday}

\section{Introduction}\label{sec:intro}

The arithmetic nature of the values of the Riemann zeta function at odd integers is still largely unknown.  
Ap\'ery~\cite{ap} proved that $\zeta(3)$ is an irrational numbers and it was proved in \cite{br, ri1} that infinitely many of the numbers $\zeta(2n+1)$, $n\ge 1$ integer, are irrational and in fact even linearly independent over $\mathbb Q$. The second author proved in \cite{zud1} the existence of at least one irrational number amongst $\zeta(5)$, $\zeta(7)$, $\zeta(9)$, $\zeta(11)$, and in  \cite{zud3} that for any integer $n\ge 0$, there exists at least one irrational number amongst the odd zeta values $\zeta(2n+3)$, $\zeta(2n+5), \ldots, \zeta(16n+7)$ (this result is used below with $n=5)$. Let us also mention that 
Fischler and the second author proved in \cite{fizud} the existence of  two distinct odd integers $m, n \in \{3, 5, \ldots, 139\}$ such that $1, \zeta(m), \zeta(n)$ are linearly independent over $\mathbb Q$.

The goal of this note is to prove the following theorem. 
\begin{theo}\label{theo:2} There exist at least two irrational numbers amongst the \textup{33} odd zeta values $\zeta(5)$, $\zeta(7)$, \ldots, $\zeta(69)$. 
\end{theo}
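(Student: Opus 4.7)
The plan is to combine a new construction of small rational linear forms in $1,\zeta(5),\zeta(7),\ldots,\zeta(69)$, analysed by the saddle point method, with Zudilin's result \cite{zud3} applied with $n=5$ (which supplies at least one irrational amongst $\zeta(13),\zeta(15),\ldots,\zeta(87)$), followed by a short diophantine elimination.

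\textbf{The new linear forms.} I would start from a very-well-poised rational integrand $R_n(t)$, with integer parameters linear in $n$ and a reflection symmetry $t\mapsto c-t$ that kills all even zeta contributions in the partial-fraction expansion. Taking either a contour integral of $R_n$ or the sum of its residues at an appropriate arithmetic progression yields a quantity
\[
L_n = a_{n,0}+\sum_{j=2}^{34}a_{n,2j+1}\,\zeta(2j+1),\qquad a_{n,\cdot}\in\mathbb Q,
\]
whose coefficients are explicit combinations of factorials and binomials. After clearing denominators by a bounded power of $d_{Cn}=\operatorname{lcm}(1,2,\ldots,Cn)$ and harvesting the $p$-adic savings characteristic of well-poised constructions, one has the arithmetic bound $\operatorname{den}(a_{n,\cdot})\le e^{Cn+o(n)}$, while the saddle point method applied to $R_n$ gives the analytic bound $|L_n|=e^{-\alpha n+o(n)}$ with an explicit constant $\alpha$. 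With the integer parameters tuned so that $\alpha>C$, Nesterenko's criterion yields at least one irrational amongst $\zeta(5),\zeta(7),\ldots,\zeta(69)$.

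\textbf{From one to two irrationals.} Assume, for contradiction, that $\zeta(m_0)$ is the unique irrational in the list (the case of zero irrationals being ruled out at once, since then a cleared-denominator version of $L_n$ would be a sequence of nonzero integers tending to zero). Each $L_n$ then collapses, modulo the fixed rational values of the other $\zeta(2j+1)$, to a rational affine form in $1$ and $\zeta(m_0)$, giving rational approximations to $\zeta(m_0)$ whose quality is controlled by $\alpha-C$. In parallel, Zudilin's construction from \cite{zud3} with $n=5$ supplies a second family $M_n$ of small rational linear forms in $1,\zeta(13),\ldots,\zeta(87)$, which under the same hypothesis also collapses to a rational combination of $1,\zeta(m_0)$ and the remaining unknowns $\zeta(71),\ldots,\zeta(87)$. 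A $2\times 2$ (or, when $m_0\le 11$, a suitably larger) determinant eliminating these unknowns across the two sequences $L_n$ and $M_n$ is then a rational number whose denominator is controlled but whose absolute value, by the two analytic estimates, is forced to tend to $0$; a non-vanishing check, based on the distinct saddle point rates of the two constructions, excludes the zero case and produces the contradiction.

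The decisive difficulty is the analytic/arithmetic balance inside the saddle point step: the exponent $\alpha$ must strictly exceed the denominator exponent $C$ over the specific window $5\le 2j+1\le 69$. This requires both a careful choice of the integer parameters of $R_n(t)$ and a thorough accounting of the well-poised $p$-adic cancellations, and it is precisely this balance that dictates the upper endpoint $69$ in the theorem.
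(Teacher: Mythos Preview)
Your elimination step in ``From one to two irrationals'' does not close. Assume $\zeta(m_0)$ is the unique irrational in $\{\zeta(5),\ldots,\zeta(69)\}$. Then your $L_n$ indeed collapses to a linear form in $1$ and $\zeta(m_0)$, but the family $M_n$ from \cite{zud3} collapses only to a linear form in $1$, $\zeta(m_0)$ and the \emph{nine} further unknowns $\zeta(71),\ldots,\zeta(87)$, about which the hypothesis says nothing. A $2\times2$ determinant built from the two families can remove at most one unknown; what remains is a small linear form in $1,\zeta(71),\ldots,\zeta(87)$, which yields no contradiction (it merely reproves that some $\zeta(j)$ with $71\le j\le87$ is irrational). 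Your parenthetical ``suitably larger determinant'' cannot repair this: you have only two one-parameter families, not ten, and taking several values of $n$ from the same family does not produce independent relations with controlled arithmetic. The appeal to ``distinct saddle point rates'' for non-vanishing is likewise not a proof.

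The paper avoids this obstruction by a different mechanism. Instead of importing an external family, it builds \emph{two} linear forms $S_n$ and $\widehat S_n$ from the \emph{same} rational function $R(t)$, one summed over integers and one over half-integers. The crucial structural fact is that they share the same zeta coefficients up to the explicit twist $2^j-1$:
\[
S_n=q_{0,n}+\sum_{j\ \text{odd}}q_{j,n}\zeta(j),\qquad
\widehat S_n=\widehat q_{0,n}+\sum_{j\ \text{odd}}q_{j,n}(2^j-1)\zeta(j).
\]
Hence for \emph{any} prescribed odd $m\in\{5,\ldots,69\}$ the combination $(2^m-1)S_n-\widehat S_n$ kills $\zeta(m)$ exactly while keeping all other $\zeta(j)$, and the saddle point analysis (together with $\widehat S_{\sigma(n)}/S_{\sigma(n)}\to-1$) shows this combination still tends to $0$ faster than the common denominator grows. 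This gives, for each $m$, an irrational $\zeta(j)$ with $j\ne m$ in the same window, which is logically equivalent to two irrationals. The device you are missing is precisely this half-integer twist producing a second form with \emph{proportional} zeta coefficients; no outside input from \cite{zud3} is used.
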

We shall in fact prove the following equivalent form of Theorem~\ref{theo:2}.
\begin{theo}\label{theo:1} For any integer $m$ such that $1\le m \le 33$, there exists at least one irrational number amongst the \textup{32} odd zeta values $\zeta(2n+3)$, where $n\in\{1,2,\ldots, 33\}\setminus\{m\}$.
\end{theo}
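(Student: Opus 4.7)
Fix $m \in \{1, \ldots, 33\}$. The plan is to exhibit, for each such $m$, a sequence of rational linear forms
$$\ell_n(m) = c_{0,n} + \sum_{\substack{1 \le j \le 33 \\ j \ne m}} c_{j,n}\, \zeta(2j+3), \quad n \ge 1,$$
that, once scaled by a common denominator $D_n$ of the rational coefficients, become integer linear combinations satisfying $|D_n \ell_n(m)| \to 0$ exponentially while $\max_j |D_n c_{j,n}|$ grows at a strictly smaller exponential rate, and for which $\ell_n(m) \ne 0$ infinitely often. The standard Nesterenko/Siegel-type irrationality criterion then precludes the 32 values $\zeta(2j+3)$, $j \ne m$, from all being rational, proving Theorem~\ref{theo:1}.

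Following the lineage Beukers--Ball--Rivoal--Zudilin, I would produce $\ell_n(m)$ by expanding a well-chosen rational function $R_n(t)$ built from Pochhammer symbols, via partial fractions, so that $\sum_t R_n(t)$ becomes a $\mathbb{Q}$-linear combination of $1$ and the odd values $\zeta(5), \zeta(7), \ldots, \zeta(2N+3)$ only; very-well-poisedness eliminates the even zeta values and, with the right extra symmetry, $\zeta(3)$ as well. Since the paper advertises a \emph{new} construction, one expects $R_n$ to depend on enough free integer parameters that the single linear condition $c_{m,n} = 0$ can be imposed for any $m \in \{1, \ldots, 33\}$; equivalently, one takes a $\mathbb{Q}$-linear combination of two constructions with different parameters and uses the extra degree of freedom to kill the coefficient of $\zeta(2m+3)$ while retaining all other 32 odd zeta values.

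The arithmetic control of $D_n$ and of the $c_{j,n}$ rests on the classical bounds for $d_n = \operatorname{lcm}(1, \ldots, n)$ (via the prime number theorem) and on the factorials appearing in the Pochhammer symbols. The analytic side is exactly where the saddle point method enters: writing $\sum_t R_n(t)$ as a Barnes-type integral of the form $\int e^{n \Phi(z)}\, \dd z$, one locates the dominant saddle of $\Phi$, deforms the contour through it along steepest descent, and reads off both the exponential decay rate of $|\ell_n(m)|$ and a non-vanishing leading amplitude, the latter yielding $\ell_n(m) \ne 0$ for all large~$n$.

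The crux of the argument is that the saddle-point-versus-denominator inequality must survive the extra constraint $c_{m,n} = 0$ \emph{uniformly} in all 33 choices of $m$. The inequality is presumably tight at the boundary of the range, and the number 33 is chosen to be the largest for which the balance between analytic decay rate and arithmetic denominator growth remains favourable after killing any one of the prescribed coefficients. Verifying this balance---locating the saddles of $\Phi$, computing their critical values, and then comparing the resulting exponential rate against the prime-number-theoretic denominator bound, all after the coefficient-killing constraint has been imposed---is the main technical obstacle. A secondary issue is checking that the leading amplitude at the dominant saddle does not accidentally vanish once the constraint is enforced, but this is typically a finite verification at the end of the saddle point computation.
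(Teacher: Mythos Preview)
Your broad strategy---build two linear forms in odd zeta values, take a linear combination that kills $\zeta(2m+3)$, control denominators via $d_n$, and use the saddle point method for the asymptotics---matches the paper's. But the specific mechanism you leave unspecified is the heart of the argument, and your two guesses (free integer parameters in $R_n$, or two constructions with different parameters) are not what the paper does.

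The paper uses a \emph{single} rational function $R(t)$ (with $A=68$) and forms two series from it: $S_n=\sum_{k\ge1}R''(k)$ and $\widehat S_n=\sum_{k\ge1}R''(k-\tfrac12)$. Because the partial-fraction coefficients $p_{j,m}$ are the same in both, the coefficient of $\zeta(j)$ in $\widehat S_n$ is exactly $(2^j-1)$ times its coefficient in $S_n$. Hence for any odd $m\in\{5,\dots,69\}$ the combination $(2^m-1)S_n-\widehat S_n$ eliminates $\zeta(m)$ automatically and uniformly, with no parameter tuning. This dissolves your worry about the inequality ``surviving the constraint uniformly in $m$'': the same pair $(S_n,\widehat S_n)$ works for every $m$, and the combining coefficient $2^m-1$ is bounded, so one saddle-point computation suffices. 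Your concern about the leading amplitude vanishing is handled by showing, via the same saddle-point analysis, that $\widehat S_{\sigma(n)}/S_{\sigma(n)}\to -1$ along a subsequence of even $n$; since $-1\ne 2^m-1$, the combination is nonzero for large $n$. The numerical check is then $e^{70-\kappa-3\delta}<1$ with $\kappa\approx66.17$ from the saddle and $\delta\approx1.30$ from an auxiliary $p$-adic factor $\Phi_n$ extracted from the coefficients.
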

Let us prove the equivalence between both theorems. Applying Theorem~\ref{theo:1} with $m=1$,  let $\zeta(2m_0+3)$ denote one irrational number amongst $\zeta(7)$, $\zeta(9)$, \ldots , $\zeta(69)$. We then apply again Theorem~\ref{theo:1} with $m=m_0$: there exists an  irrational number $\zeta(2n_0+3)$ for some $n_0\in\{1,2,\ldots, 33\}\setminus\{m_0\}$, and Theorem~\ref{theo:2} follows. Conversely, let $\zeta(2m_0+3)$ and $\zeta(2n_0+3)$ be two irrational numbers given by Theorem~\ref{theo:2} with $1\le m_0, n_0\le 33$. Let $m\in \{1,2,\ldots,33\}$: if $m=m_0$, we pick the irrational number $\zeta(2n_0+3)$ while if $m\neq m_0$, we pick the irrational number $\zeta(2m_0+3)$, and Theorem~\ref{theo:1} follows.

We now make some comparisons with  the above mentioned results. The set $\{5, 7, \ldots, 69\}$ is much smaller than $\{3, 5, \ldots, 139\}$, but our  method can't decide if $1$ and the two irrational odd zeta values in Theorem \ref{theo:2} are linearly independent over~$\mathbb Q$. 
Also, there are two irrational odd zeta values $\zeta(m)$ and $\zeta(n)$ with $m\in \{5,7,9,11\}$ and $n\in \{13, 15, \ldots , 87\}$: this bound is worse than in Theorem~\ref{theo:2}, but the  localization is more precise.

In general, results of the type ``there exists an irrational number amongst (\dots)'' are proved using the saddle point method to estimate the decay of certain sequences of linear forms in zeta values. This is the case of the proofs of the results in \cite{ri2, zud1, zud3}. Very recently, the second author introduced a new method to prove in an ``elementary'' way the existence of at least one irrational number amongst  $\zeta(5)$, $\zeta(7)$,  \ldots, $\zeta(25)$. In particular, he completely avoided the use of the saddle point method. 
Our approach is somewhat different, as we shall indeed combine this new approach with the saddle point method  to obtain the proof of Theorem~\ref{theo:1}. Though one can still use an elementary strategy (as outlined in \cite{sprang}) to establish a result similar to Theorem~\ref{theo:2}, this  result is weaker than ours. For further applications of this new method, see \cite{kratzud, sprang}.

The paper is organized as follows. In \S\ref{sec:1}, we introduce some notations and define two series $S_n$ and $\widehat{S}_n$ that enable us to construct ``good'' linear forms in odd zeta values in  \S\ref{sec:2}. In \S\ref{sec:3} we obtain two integrals representations $S_n$ and $\widehat{S}_n$, to which we apply the saddle point method in \S\ref{sec:4}. We complete the proof of Theorem~\ref{theo:1} in \S\ref{sec:5}.

\section{Notations}\label{sec:1}

Let $A$ denote an integer $\ge 15$. For any integer $n\ge0$, we define the rational function of $t$
\begin{align*}
R(t)&:=n!^{A-15}2^{18n}(2t+n)\frac{(t-n)_n^3(t+n+1)_{n}^3(t-n+\frac12)_{3n}^3}{(t)_{n+1}^{A}} 
\\
&= n!^{A-15}2^{-3}(2t+n)\frac{(2t-2n)_{6n+1}^3}{(t)_{n+1}^{A+3}}
\end{align*}
where $(x)_m:=x(x+1)\cdots (x+m-1)$. 

The degree of $R(t)$ is $(15-A)n-A+1\le -2$ so that the partial fraction expansion is 
\begin{equation}\label{eq:5}
R(t)=\sum_{j=1}^A \sum_{m=0}^{n} \frac{p_{j,m}}{(t+m)^j}
\end{equation}
with
\begin{equation}\label{eq:pjm}
p_{j,m}:=\frac{1}{(A-j)!}\big(R(t)(t+m)^A\big)^{(A-j)}_{t=-m}\in  \mathbb Q
\end{equation}
and the residue at $\infty$ of $R(t)$ is $0$.

For any integer $n\ge 0$, we define the series which will be our main objects of study:
$$
S_n:=\sum_{k=1}^\infty R''(k), \qquad \widehat{S}_n:=\sum_{k=1}^\infty  R''\Big(k-\frac12\Big)
$$
where the double prime stands for double differentiation.

A series similar to $S_n$ with $R(t)$ replaced by $n!^{A-6}(2t+n)(t-n)_n^3(t+n+1)_{n}^3(t)_{n+1}^{-A}$ was used in \cite{ri2} to prove the existence of at least one irrational number amongst $\zeta(5)$, $\zeta(7)$, \ldots, $\zeta(21)$. We shall follow the same approach as in \cite{ri2}  and combine it with \cite{zud2} where the additional factor $(t-n+\frac12)_{3n}$ is the principal innovation. Its effects are explained in the comments just after Proposition \ref{prop:1} below.

The arithmetic normalization $n!^{A-15}2^{18n}$ of the rational function $R(t)$ is not optimal in the sense that there is a large factor 
which can be cancelled out from the coefficients $p_{j,m}$. We carefully perform this analysis in Proposition \ref{prop:1}.
It is also possible that the arithmetic behavior of the  coefficients of the linear forms $S_n$ and $\widehat{S}_n$ is even better, because of
certain hypergeometric transformations underlying the construction. This phenomenon, known as ``denominators conjecture'',
is studied in \cite{kratriv}, and the methods developed there might bring in similar improvements for this new
situation. To keep our exposition at an accessible level, we do not include such considerations here.

\section{Construction of two linear forms in odd zeta values} \label{sec:2}

\begin{prop}\label{prop:1} Let us assume that $A\ge 16$ and $n\ge 0$ are both even.

\medskip
$(i)$ We have
\begin{equation}\label{eq:6}
S_n= q_{0,n}+\sum_{\stackrel{j=5}{j\, \textup{odd}}}^{A+1} q_{j,n} \zeta(j)
\end{equation}
and
\begin{equation}\label{eq:7}
\widehat{S}_n= \widehat{q}_{0,n}+\sum_{\stackrel{j=5}{j\, \textup{odd}}}^{A+1} q_{j,n} (2^j-1)\zeta(j)
\end{equation}
where $q_{0,n}$, $\widehat{q}_{0,n}$ and the $q_{j,n}$'s are rational numbers, that depend on $A$.
\medskip

$(ii)$ Let $d_n:=\textup{lcm}\{1,2,\ldots,n\}$ and
$\Phi_n\in \mathbb N$ be the product of prime powers defined in~\eqref{Phi_n} below. Then $\Phi_n^{-3}d_n^{A+2}$ is a common denominator of the coefficients 
$q_{0,n}$, $\widehat{q}_{0,n}$ and the $q_{j,n}$ for odd $j\ge 5$.
\end{prop}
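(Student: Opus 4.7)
The plan is to handle (i) by partial fractions combined with a symmetry argument on $R(t)$, and (ii) by a $p$-adic valuation analysis of the formula \eqref{eq:pjm} for $p_{j,m}$. For (i), I would differentiate \eqref{eq:5} twice, obtaining $R''(t)=\sum_{j=1}^{A}\sum_{m=0}^{n}j(j+1)\,p_{j,m}(t+m)^{-(j+2)}$, and sum over $k\ge 1$ using the identity $\sum_{k=1}^{\infty}(k+m)^{-s}=\zeta(s)-\sum_{\ell=1}^{m}\ell^{-s}$ (convergent since $s=j+2\ge 3$). This writes $S_n$ as $q_{0,n}+\sum_{j=1}^{A}j(j+1)\,P_j\,\zeta(j+2)$ where $P_j:=\sum_{m=0}^{n}p_{j,m}$ and $q_{0,n}\in\mathbb{Q}$ is an explicit harmonic combination. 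The degree bound $\deg R\le -2$ forces $P_1=0$ automatically and removes $\zeta(3)$. To eliminate the remaining even $\zeta$-coefficients I would verify the reflection $R(-t-n)=-R(t)$ by tracking signs of each factor under $t\mapsto -t-n$: $(2t+n)$ flips sign, $(t-n)_n$ and $(t+n+1)_n$ swap while each picking up $(-1)^n$, $(t-n+\tfrac12)_{3n}$ picks up $(-1)^{3n}$, and $(t)_{n+1}^A$ picks up $(-1)^{A(n+1)}$; the aggregate sign $-(-1)^{15n+A(n+1)}$ equals $-1$ exactly when $A$ and $n$ are both even. Comparing partial fractions yields $p_{j,n-m}=(-1)^{j+1}p_{j,m}$, which upon summation gives $P_j=0$ for every even $j$ and proves \eqref{eq:6}. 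Formula \eqref{eq:7} follows from the same scheme applied to $\widehat{S}_n$ via
\[
\sum_{k=1}^{\infty}\frac{1}{(k-\tfrac12+m)^{s}}=(2^{s}-1)\zeta(s)-2^{s}\sum_{\ell=1}^{m}\frac{1}{(2\ell-1)^{s}},
\]
obtained by isolating odd integers in the tail; the only change to the $\zeta$-coefficients is the factor $2^{j+2}-1$.

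For (ii), I would combine \eqref{eq:pjm} with the second form of $R(t)$ to write
\[
p_{j,m}=\frac{n!^{A-15}}{(A-j)!\,2^{3}}\left[(2t+n)\,(2t-2n)_{6n+1}^{3}\,\frac{(t+m)^{A}}{(t)_{n+1}^{A+3}}\right]^{(A-j)}_{t=-m},
\]
apply Leibniz to the $(A-j)$th derivative, and express $p_{j,m}$ as a finite sum of products of Pochhammer evaluations and rational factors of bounded denominator. The factor $d_n^{A+2}$ then absorbs the harmonic-type denominators of depth $\le A+2$ that appear both in the derivatives producing $p_{j,m}$ and in the linear combinations defining $q_{0,n}$ and $\widehat{q}_{0,n}$. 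The cubic gain $\Phi_n^{-3}$ comes from a Legendre--Kummer estimate of the $p$-adic valuation of the bracketed product $(t-n)_{n}(t+n+1)_{n}(t-n+\tfrac12)_{3n}$: since this product appears cubed in $R(t)$, any arithmetic gain at a prime $p$ is replicated three times, and the surviving primes and exponents give precisely the definition of $\Phi_n$ in \eqref{Phi_n}.

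The main obstacle is this last valuation count: carrying out the prime-by-prime Legendre estimates uniformly in $j,m,n$, and tracking how the $(A-j)$ differentiations and the $A$-fold pole at $t=-m$ interact with the cubed Pochhammer factors in the numerator. The partial-fraction and symmetry work in (i) is routine modulo careful sign bookkeeping in the reflection.
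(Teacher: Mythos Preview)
Your treatment of~(i) is correct and matches the paper's argument essentially line for line: partial fractions, the reflection $R(-t-n)=-R(t)$ to kill the even-index row sums $P_j$, and the vanishing residue at infinity to kill $P_1$ and hence $\zeta(3)$.

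For~(ii) there is a genuine gap, and it is not where you locate it. The $\Phi_n^{-3}$ gain is indeed obtained along the lines you sketch: factor $R(t)=(2t+n)F(t)^3G(t)^{A-15}$ with $F(t)=2^{6n}(t-n)_n(t+n+1)_n(t-n+\tfrac12)_{3n}/(t)_{n+1}^5$, evaluate $\bigl(F(t)(t+m)^5\bigr)\big|_{t=-m}=(2n+2m)!\,(4n-2m)!/\bigl(m!^6(n-m)!^6\bigr)$, apply Legendre's formula prime by prime to extract $\Phi_n$, propagate to derivatives, and combine via Leibniz. This is standard once the factorisation is in hand and yields $\Phi_n^{-3}d_n^{A-j}p_{j,m}\in\mathbb Z$, hence $\Phi_n^{-3}d_n^{A+2}q_{0,n}\in\mathbb Z$ and $\Phi_n^{-3}d_n^{A+2}q_{j,n}\in\mathbb Z$.

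The real obstacle is the denominator of $\widehat q_{0,n}$. Your tail identity produces
\[
\widehat q_{0,n}=-\sum_{j=1}^{A}\sum_{m=0}^{n}j(j+1)\,p_{j,m}\cdot 2^{j+2}\sum_{\ell=1}^{m}\frac{1}{(2\ell-1)^{j+2}},
\]
and the inner sum runs over \emph{odd} integers $2\ell-1$ up to $2n-1$. These are not in general divisors of $d_n$ (take $2n-1$ prime), so clearing denominators from this expression only yields $\Phi_n^{-3}d_{2n}^{A+2}\widehat q_{0,n}\in\mathbb Z$, which is too weak. The paper repairs this with a re-summation trick you have not mentioned: because $R''(k-\tfrac12)=0$ for $k=0,-1,\dots,-\omega$ with $\omega=\lfloor(n-1)/2\rfloor$, one may start the sum defining $\widehat S_n$ at $k=-\omega$ rather than $k=1$. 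After splitting the $m$-sum at $m=\omega$, the resulting alternative formula for $\widehat q_{0,n}$ involves only half-integers $k\pm\tfrac12$ with $2k\pm1\le n$, and then $d_n^{A+2}$ genuinely suffices. Without this step your claimed bound on $\widehat q_{0,n}$ does not follow.
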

Explicit expressions for the sequences $q_{0,n}$, $\widehat{q}_{0,n}$ and $q_{j,n}$ are given in the proof. The relevance of the series $S_n$ and $\widehat{S}_n$ for Theorem~\ref{theo:1} can now easily be shown: for any odd integer $m\in \{5,7,\ldots, A+1\}$, we have
$$
(2^m-1)S_n-\widehat{S}_n=(2^m-1)q_{0,n}-\widehat{q}_{0,n}+\sum_{\stackrel{j=5, j\neq m}{j\, \textup{odd}}}^{A+1} q_{j,n} (2^m-2^j)\zeta(j)
$$
where $\zeta(m)$ no longer appears.

\begin{proof} $(i)$ Using the partial fraction expansion~\eqref{eq:5}, we get
\begin{align*}
S_n
&=\sum_{j=1}^A \sum_{m=0}^{n} p_{j,m} \sum_{k=1}^\infty \frac{d^2}{dk^2}\frac{1}{(k+m)^j}\\
&=\sum_{j=1}^A \sum_{m=0}^{n} p_{j,m} \sum_{k=1}^\infty \frac{j(j+1)}{(k+m)^{j+2}}\\
&=\sum_{j=1}^A \Big(\sum_{m=0}^{n} p_{j,m}\Big) j(j+1)\zeta(j+2)-\sum_{j=1}^A\sum_{m=0}^{n}\sum_{k=1}^{m} \frac{j(j+1)p_{j,m}}{k^{j+2}}.
\end{align*}
Now, $-\sum_{m=0}^{n} p_{j,1}$ is the residue at $t=\infty$ of $R(t)$, hence is $0$. Moreover, we can use in \eqref{eq:5} the symmetry $R(-n-t)=(-1)^{(A+1)(n+1)}R(t)=-R(t)$ (because $A$ and $n$ are even): it implies that $p_{j,m}=(-1)^{j+1}p_{j,n-m}$. Hence
$
\sum_{m=0}^{n} p_{j,m}=0
$
when $j$ is even. Therefore
$$
S_n=\sum_{\stackrel{j=5}{j\, \textup{odd}}}^{A+1} \Big(\sum_{m=0}^{n} (j-2)(j-1)p_{j-2,m}\Big) \zeta(j)-\sum_{j=1}^A\sum_{m=0}^{n}\sum_{k=1}^{m} \frac{j(j+1)p_{j,m}}{k^{j+2}}.
$$

We now let $\widehat{\zeta}(s):=\sum_{k=1}^\infty \frac{1}{(k-\frac12)^s}=(2^s-1)\zeta(s)$. 
We also have
\begin{align}
\widehat{S}_n
&=\sum_{j=1}^A \sum_{m=0}^{n} p_{j,m} \sum_{k=1}^\infty \frac{d^2}{dk^2}\frac{1}{(k+m-\frac12)^j}\notag
\\
&=\sum_{j=1}^A \sum_{m=0}^{n} p_{j,m} \sum_{k=1}^\infty \frac{j(j+1)}{(k+m-\frac12)^{j+2}}\notag
\\
&=\sum_{j=1}^A \Big(\sum_{m=0}^{n} j(j+1)p_{j,m}\Big) \widehat{\zeta}(j+2)-\sum_{j=1}^A\sum_{m=0}^{n}\sum_{k=1}^{m} \frac{j(j+1)p_{j,m}}{(k-\frac12)^{j+2}}. \label{eq:9}
\end{align}
Hence
$$
\widehat{S}_n=\sum_{\stackrel{j=5}{j\, \textup{odd}}}^{A+1} \Big(\sum_{m=0}^{n} (j-2)(j-1)p_{j-2,m}\Big) (2^j-1)\zeta(j)-\sum_{j=1}^A\sum_{m=0}^{n}\sum_{k=1}^{m} \frac{j(j+1)p_{j,m}}{(k-\frac12)^{j+2}}.
$$

We have thus proved \eqref{eq:6} and \eqref{eq:7} with
\begin{equation}\label{eq:10}
q_{j,n}:=\sum_{m=0}^{n} (j-2)(j-1)p_{j-2,m}, \quad j\; \textup{odd} \ge 5,
\end{equation}
\begin{equation}\label{eq:11}
q_{0,n}:=-\sum_{j=1}^A\sum_{m=0}^{n}\sum_{k=1}^{m} \frac{j(j+1)p_{j,m}}{k^{j+2}}
\end{equation}
and
\begin{equation}\label{eq:8}
\widehat{q}_{0,n}:=-\sum_{j=1}^A\sum_{m=0}^{n}\sum_{k=1}^{m} \frac{j(j+1)p_{j,m}}{(k-\frac12)^{j+2}}.
\end{equation}

$(ii)$ We can apply {\em mutatis mutandis}  \cite[Lemma 1]{zud2} and get that $d_n^{A-j}p_{j,m}\in \mathbb Z$ for any $j\in\{1,\ldots,A\}$ and any $m \in\{0, \ldots, n\}$. 

On the other hand, observe that the rational function $R(t)$ can be written as $(2t+\nobreak n)\*F(t)^3G(t)^{A-15}$
where
$$
F(t):=2^{6n}\frac{(t-n)_n(t+n+1)_{n}(t-n+\frac12)_{3n}}{(t)_{n+1}^5}
\quad\text{and}\quad
G(t):=\frac{n!}{(t)_{n+1}}.
$$
The numbers
$$
\big(F(t)(t+m)^5\big)_{t=-m}=\frac{(2n+2m)!\,(4n-2m)!}{m!^6(n-m)!^6}
\quad\text{for}\; m=0,1,\dots,n,
$$
have a large common factor. Indeed, by the standard formula for $p$-adic valuation $v_p(n!)=\sum_{\ell=1}^\infty [n/p^\ell]$ of $n!$, we get
$$
v_p\Big(\frac{(2n+2m)!\,(4n-2m)!}{m!^6(n-m)!^6}\Big)
=\rho\Big(\frac np,\frac mp\Bigr)
\ge\rho_0\Big(\frac np\Bigr)
$$
for an odd prime $p>2\sqrt{n}$, where the function
\begin{multline*}
\rho(x,y):=\lfloor2x+2y\rfloor+\lfloor4x-2y\rfloor-6\lfloor y\rfloor-6\lfloor x-y\rfloor
\\
=6\{y\}+6\{x-y\}-\{2x+2y\}-\{4x-2y\}
\end{multline*}
is periodic of period 1 both in $x$ and $y$, and 
$$
\rho_0(x):=\min_{y\in\mathbb R}\rho(x,y)=\min_{0\le y<1}\rho(x,y)
$$
is also 1-periodic. The latter can be explicitly given on the interval $0\le x<1$ by
$$
\rho_0(x)=\begin{cases}
0 & \text{if $0\le x<\frac13$}, \\
1 & \text{if $\frac13\le x<\frac12$}, \\
2 & \text{if $\frac12\le x<\frac23$}, \\
3 & \text{if $\frac23\le x<\frac56$}, \\
4 & \text{if $\frac56\le x<1$}
\end{cases}
$$
(see, for example, \cite[\S\,4]{zud3} for similar  computations). Thus, denoting
\begin{equation}
\Phi_n:=\prod_{2\sqrt n<p\le n}p^{\rho_0(n/p)}
\label{Phi_n}
\end{equation}
(a product over prime numbers), we obtain
$$
\Phi_n^{-1}\big(F(t)(t+m)^5\big)_{t=-m}\in\mathbb Z
\quad\text{for}\; m=0,1,\dots,n.
$$
Following the lines of the proof of  \cite[Lemma 4.2]{zud3}, we see that these inclusions imply
$$
\Phi_n^{-1}d_n^k\,\frac1{k!}\big(F(t)(t+m)^5\big)^{(k)}_{t=-m}\in\mathbb Z
$$
for $m=0,1,\dots,n$ and all integers $k\ge 0$. For the same range of indices we also have
$$
d_n^k\,\frac1{k!}\big(G(t)(t+m)\big)^{(k)}_{t=-m}\in\mathbb Z.
$$
Combining these inclusions with the representation $R(t)=(2t+\nobreak n)\*F(t)^3G(t)^{A-15}$ and using  Leibniz's rule
for differentiating products, we conclude that the numbers $p_{j,m}$ in \eqref{eq:pjm} satisfy
$\Phi_n^{-3}d_n^{A-j}p_{j,m}\in\mathbb Z$ for any $j\in\{1,\ldots,A\}$ and any $m \in\{0, \ldots, n\}$.

From the expressions of  $q_{j,n}$ in \eqref{eq:10} and \eqref{eq:11}, we then deduce that $\Phi_n^{-3}d_n^{A+2}q_{0,n}\in \mathbb Z$ and  $\Phi_n^{-3}d_n^{A+2}q_{j,n}\in \mathbb Z$ for any odd $j\ge 5$.

\medskip

This argument does not work directly for $\widehat{q}_{0,n}$ with the expression in \eqref{eq:8} because it leads only to the weaker estimate $\Phi_n^{-3}d_{2n}^{A+2}\widehat{q}_{0,n}\in \mathbb Z$.
We need an alternative expression for $\widehat{q}_{0,n}$, and for this we follow \cite[p. 5]{zud2}. We let $\omega=\lfloor \frac{n-1}{2}\rfloor$. Since $R''(k)=0$ for $k=-\frac12, -\frac{3}{2}, \ldots, -n+\frac{1}{2}$, in particular $R''(k-\frac12)=0$ for $k=-\omega, \ldots, -1, 0$ when~(\footnote{This not true when $n=0$, but in this case $\widehat{q}_{0,n}=0$ and there is nothing to prove.}) $n\ge 1$. Hence, for any $n\ge 1$,  
\begin{align}
\widehat{S}_n &=\sum_{k=-\omega}^\infty R''\Big(k-\frac12\Big)=\sum_{j=1}^A \sum_{m=0}^{n} p_{j,m} \sum_{k=-\omega}^\infty \frac{d^2}{dk^2}\frac{1}{(k+m-\frac12)^j}\notag
\\
&=\sum_{j=1}^A \sum_{m=0}^{\omega} p_{j,m} \sum_{k=-\omega}^\infty \frac{j(j+1)}{(k+m-\frac12)^{j+2}}
+\sum_{j=1}^A \sum_{m=\omega+1}^{n} p_{j,m} \sum_{k=-\omega}^\infty \frac{j(j+1)}{(k+m-\frac12)^{j+2}} \notag
\displaybreak[2]\\
&=\sum_{j=1}^A \sum_{m=0}^{\omega} p_{j,m} \Big(\sum_{k=m-\omega}^0 \frac{j(j+1)}{(k-\frac12)^{j+2}} +\sum_{k=1}^\infty \frac{j(j+1)}{(k-\frac12)^{j+2}} \Big) \notag
\\
& \qquad \qquad + \sum_{j=1}^A \sum_{m=\omega+1}^{n} p_{j,m} \Big(\sum_{k=1}^\infty \frac{j(j+1)}{(k-\frac12)^{j+2}} - \sum_{k=1}^{m-\omega-1}\frac{j(j+1)}{(k-\frac12)^{j+2}}\Big)
\notag
\displaybreak[2]\\
&=\sum_{j=1}^{A}\Big(\sum_{m=0}^{n} j(j+1)p_{j,m}\Big)\widehat{\zeta}(j+2) + \sum_{j=1}^A \sum_{m=1}^\omega (-1)^jj(j+1)p_{j,m}\sum_{k=0}^{\omega-m}\frac{1}{(k+\frac12)^{j+2}} \notag
\\
& \qquad \qquad -\sum_{j=1}^A \sum_{m=\omega+1}^n (-1)^jj(j+1)p_{j,m}\sum_{k=1}^{m-\omega-1}\frac{1}{(k-\frac12)^{j+2}}. \label{eq:12}
\end{align}
Comparing \eqref{eq:12} with \eqref{eq:9}, we see that when $n\ge 1$, 
\begin{equation*}
\widehat{q}_{0,n}=\sum_{j=1}^A \sum_{m=1}^\omega p_{j,m}\sum_{k=0}^{\omega-m}\frac{(-1)^jj(j+1)}{(k+\frac12)^{j+2}}
 -\sum_{j=1}^A \sum_{m=\omega+1}^n p_{j,m}\sum_{k=1}^{m-\omega-1}\frac{(-1)^jj(j+1)}{(k-\frac12)^{j+2}}. \qquad \quad
\end{equation*}
This expression is  more suitable than \eqref{eq:8} and an analysis similar to \cite[p. 6]{zud2} shows that $\Phi_n^{-3}d_n^{A+2}\widehat{q}_{0,n}\in \mathbb Z$ as expected.
\end{proof}

\section{Complex integral representations of $S_n$ and $\widehat S_n$} \label{sec:3}
In this section, the integers $A\ge 15$ and $n\ge 0$ are not necessarily even.

\begin{prop}\label{prop:2} Let $L$ denotes any vertical line $\{c+iy, y\in \mathbb R\}$ oriented from $y>0$ to $y<0$, where $c\in (\frac12,n)$. We have
\begin{equation}\label{eq:1}
S_n=\frac{n!^{A-15}}{i\pi}\int_L (2t+n)\frac{\Gamma(t)^{A+3}\Gamma(2t+4n+1)^3\Gamma(2n-2t+1)^3}{\Gamma(t+n+1)^{A+3}}\cos(\pi t)^4dt
\end{equation}
and
\begin{equation}\label{eq:2}
\widehat{S}_n=-\frac{n!^{A-15}}{i\pi}\int_L (2t+n-1)\frac{\Gamma(t-\frac{1}{2})^{A+3}\Gamma(2t+4n)^3\Gamma(2n-2t+2)^3}{\Gamma(t+n+\frac12)^{A+3}}\cos(\pi t)^4dt.
\end{equation}
\end{prop}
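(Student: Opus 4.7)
The proof plan rests on three ingredients: a closed-form expression for $R(t)$ containing a $\sin(2\pi t)^3$ factor (obtained via the Gamma reflection and duplication formulas), the trigonometric identity $\cos(\pi t)/\sin(\pi t)^3 = (2\pi^2)^{-1}(\cot(\pi t))''$, and the residue theorem applied to the closed contour obtained from $L$ by adding a large right semicircle.

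For \eqref{eq:1}, I would first convert each Pochhammer in $R(t)$ into a ratio of Gammas, then apply the reflection formula $\Gamma(z)\Gamma(1-z) = \pi/\sin(\pi z)$ to $\Gamma(t-n)$ and $\Gamma(t-n+\tfrac12)$ (producing $\sin(\pi t)$ and $\cos(\pi t)$ together with $\Gamma(n-t+1)$ and $\Gamma(n-t+\tfrac12)$), and the Legendre duplication formula on both $\Gamma(t+2n+1)\Gamma(t+2n+\tfrac12)$ and $\Gamma(n-t+1)\Gamma(n-t+\tfrac12)$ to combine them into $\Gamma(2t+4n+1)$ and $\Gamma(2n-2t+1)$ respectively. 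After collecting powers of $2$ and $\pi$ and using $\sin(\pi t)\cos(\pi t)=\tfrac12\sin(2\pi t)$, I expect the clean identity
\[
R(t) = \frac{n!^{A-15}}{8\pi^3}(2t+n)\frac{\Gamma(t)^{A+3}\Gamma(2t+4n+1)^3\Gamma(2n-2t+1)^3}{\Gamma(t+n+1)^{A+3}}\sin(2\pi t)^3.
\]
Plugging this into the right-hand side of \eqref{eq:1} and using $\cos(\pi t)^4/\sin(2\pi t)^3=\cos(\pi t)/(8\sin(\pi t)^3)$ reduces the integrand to $R(t)\cos(\pi t)/\sin(\pi t)^3$ up to the prefactor; the trigonometric identity above then rewrites the whole right-hand side as $(2i)^{-1}\int_L R(t)(\cot(\pi t))''\,dt$.

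Two integrations by parts are legitimate because $\deg R\le -2$ and both $\cot(\pi t)$ and $(\cot(\pi t))'$ stay bounded as $|\mathrm{Im}\,t|\to\infty$; they transform the integral into $(2i)^{-1}\int_L R''(t)\cot(\pi t)\,dt$. Closing $L$ (oriented downward) by a large right semicircle yields a counterclockwise contour around the right half-plane, and the arc contribution vanishes as its radius tends to infinity by the $O(|t|^{-4})$ decay of $R''$. Since $R$ is holomorphic at every positive integer $k$ and $\cot(\pi t)$ has a simple pole there with residue $1/\pi$, the residue theorem gives $\int_L R''(t)\cot(\pi t)\,dt = 2i\sum_{k>c}R''(k)$. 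Finally, the triple zero of $(t-n)_n^3$ at $t = 1,2,\dots,n$ forces $R''(k)=0$ for those $k$, so $\sum_{k>c}R''(k) = \sum_{k\ge1}R''(k) = S_n$ for any $c\in(\tfrac12,n)$, and \eqref{eq:1} follows.

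For \eqref{eq:2} I would repeat the argument with $\widetilde R(t) := R(t-\tfrac12)$, noting that $\widehat{S}_n = \sum_{k\ge1}\widetilde R''(k)$ and that the triple zero of $(t-n+\tfrac12)_{3n}^3$ at the shifted argument $t-\tfrac12 = k-\tfrac12$ gives $\widetilde R''(k)=0$ for $k=1,\dots,n$. Replacing $t$ by $t-\tfrac12$ in the Gamma identity above produces exactly the factors appearing in the integrand of \eqref{eq:2}, namely $\Gamma(t-\tfrac12)^{A+3}$, $\Gamma(2t+4n)^3$, $\Gamma(2n-2t+2)^3$, $\Gamma(t+n+\tfrac12)^{A+3}$ and the linear term $2t+n-1$, together with an overall minus sign coming from $\sin(2\pi(t-\tfrac12))^3 = -\sin(2\pi t)^3$; this sign accounts precisely for the leading minus in \eqref{eq:2}. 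The main obstacle is the algebraic bookkeeping in the Gamma identity, in particular tracking the powers of $2$, $\pi$, and the $(-1)^n$ signs from three applications of the reflection formula; once this identity is in hand, the complex-analytic steps (integration by parts, decay on the arc, residue computation) are routine.
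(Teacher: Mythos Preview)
Your argument is correct and reaches exactly the same intermediate identity
\[
S_n=\frac{1}{i\pi}\int_L R(t)\,\frac{\pi^3\cos(\pi t)}{\sin(\pi t)^3}\,dt
\]
as the paper, but you arrive there by a genuinely different device. The paper integrates $R(t)\dfrac{\pi^3}{\sin(\pi t)^3}e^{ut}$ over a rectangular contour with right side at $N+\frac12$; the triple poles at $t=k$ produce $\big(\frac{\pi^2+u^2}{2}R(k)+uR'(k)+\frac12R''(k)\big)(-e^u)^k$, and summing the two specialisations $u=\pm i\pi$ kills the $R$ and $R'$ contributions and leaves $\sum_k R''(k)$. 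You instead exploit the closed form $(\cot\pi t)''=2\pi^2\cos(\pi t)/\sin^3(\pi t)$ and move the two derivatives onto $R$ by parts, reducing the residue computation to the simple poles of $\cot(\pi t)$. Your route bypasses both the third-order Laurent expansion and the auxiliary parameter $u$, at the cost of having to justify the two boundary terms; the paper's route is more flexible if one ever needs higher derivatives of $R$. Your derivation of the $\sin(2\pi t)^3$ identity via reflection plus Legendre duplication is also a legitimate alternative to the paper's shortcut through the compact factorisation $R(t)=n!^{A-15}2^{-3}(2t+n)(2t-2n)_{6n+1}^3/(t)_{n+1}^{A+3}$, which needs only one application of the reflection formula.

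One small technical point: closing $L$ by a ``large right semicircle'' is not quite clean, because that arc crosses the real axis where $\cot(\pi t)$ blows up, so the blanket $O(|t|^{-4})$ bound for $R''$ does not by itself control the arc integral. Either let the radius run through half-integers, or (as the paper does) use a rectangular contour with vertical side at $\mathrm{Re}\,t=N+\frac12$; the rest of your argument is unaffected.
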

\begin{proof} We adapt the proof of \cite[Lemma 4]{ri2}. Using the trivial identity $(x)_m=\Gamma(x+m)/\Gamma(x)$, we first rewrite
\begin{align}
R(t)&=n!^{A-15}2^{-3} (2t+n)\frac{\Gamma(t)^{A+3}\Gamma(2t+4n+1)^3}{\Gamma(t+n+1)^{A+3}\Gamma(2t-2n)^3} \label{eq:R1}
\\
&=n!^{A-15}2^{-3} (2t+n)\frac{\Gamma(t)^{A+3}\Gamma(2t+4n+1)^3\Gamma(2n-2t+1)^3}{\Gamma(t+n+1)^{A+3}}\cdot\frac{\sin(2\pi t)^3}{\pi^3}\label{eq:R}
\end{align}
where \eqref{eq:R} is a consequence of the reflection formula $\Gamma(x)\Gamma(1-x)=\pi/\sin(\pi x)$ applied to \eqref{eq:R1} with $x=2t-2n$.

Let $u$ be such that $\textup{Re}(u)\le 0$ and $\vert \textup{Im}(u)\vert\le 3\pi$. Let $c$ denote any real number in $(\frac12,n)$, and $N$ any integer $\ge n+1$. Let $C_N$ denote the rectangular contour (oriented positively) with sides $[c-iN, N+\frac12-iN]$, $[N+\frac12-iN, N+\frac12+iN]$, $[N+\frac12+iN, c+iN]$, $[c+iN,c-iN]$. 
By the residue theorem, and because $R^{(j)}(k)=0$ for $k=1, \ldots, n$ and $j\in\{0,1,2\}$, we have
\begin{align*}
\frac{1}{2i\pi}\int_{C_N} R(t) \frac{\pi^3}{\sin(\pi t)^3}e^{u t}dt=
\sum_{k=n+1}^N \left(\frac{\pi ^2+u^2}{2}R(k)+ uR'(k)+ \frac12R''(k)\right)(-e^u)^k.
\end{align*}
The conditions on $u$ ensure that, as $N\to +\infty$, we have 
\begin{equation}\label{eq:17}
\frac{1}{2i\pi}\int\limits_{c+i\infty}^{c-i\infty} R(t) \frac{\pi^3}{\sin(\pi t)^3}e^{u t}dt =
\sum_{k=n+1}^\infty \left(\frac{\pi ^2+u^2}{2}R(k)+ uR'(k)+ \frac12R''(k)\right)(-e^u)^k.
\end{equation}
In particular, summing the two specializations of \eqref{eq:17} for $u=i\pi$ and $u=-i\pi$, we get
$$
\frac{1}{i\pi}\int\limits_{c+i\infty}^{c-i\infty} R(t) \frac{\pi^3 \cos(\pi t)}{\sin(\pi t)^3}dt=S_n. 
$$
Hence, 
\begin{align*}
S_n
&=\frac{n!^{A-15}2^{-3}}{i\pi}\int\limits_{c+i\infty}^{c-i\infty} (2t+n)\frac{\Gamma(t)^{A+3}\Gamma(2t+4n+1)^3\Gamma(2n-2t+1)^3}{\Gamma(t+n+1)^{A+3}}\cdot\frac{\sin(2\pi t)^3\cos( \pi t)}{\sin(\pi t)^3} dt
\\
&=\frac{n!^{A-15}}{i\pi}\int\limits_{c+i\infty}^{c-i\infty} (2t+n)\frac{\Gamma(t)^{A+3}\Gamma(2t+4n+1)^3\Gamma(2n-2t+1)^3}{\Gamma(t+n+1)^{A+3}}\cos( \pi t)^4 dt.
\end{align*}

With $t-\frac12$ instead of $t$ in~\eqref{eq:R}, we have
\begin{align*}
R\Big(t-\frac12\Big)
&=-n!^{A-15}2^{-3} (2t+n-1)\frac{\Gamma(t-\frac12)^{A+3}\Gamma(2t+4n)^3\Gamma(2n-2t+2)^3}{\Gamma(t+n+\frac12)^{A+3}}\cdot\frac{\sin(2\pi t)^3}{\pi^3}.
\end{align*}
Since $R^{(j)}(k-\frac12)=0$ for $k=1, \ldots, n$ and $j\in\{0,1,2\}$, it follows again that
$$
\frac{1}{i\pi}\int_{C_N} R\Big(t-\frac12\Big) \frac{\pi^3 \cos(\pi t)}{\sin(\pi t)^3}dt = \sum_{k=1}^N R''\Big(k-\frac12\Big).
$$ 
Hence, upon letting $N\to +\infty$, 
$$
\widehat{S}_n=-\frac{n!^{A-15}}{i\pi}\int\limits_{c+i\infty}^{c-i\infty} (2t+n-1)\frac{\Gamma(t-\frac{1}{2})^{A+3}\Gamma(2t+4n)^3\Gamma(2n-2t+2)^3}{\Gamma(t+n+\frac12)^{A+3}}\cos(\pi t)^4dt
$$
and the proof is now complete.
\end{proof}

\section{Asymptotic behavior of $S_n$ and $\widehat{S}_n$}\label{sec:4}

To prove Theorem~\ref{theo:2}, we have to  determine $A$ even minimal such that $\Phi_{n}^{-3}d_{n}^{A+2}S_{n}\to 0$ and $\Phi_{n}^{-3}d_{n}^{A+2}\widehat{S}_{n}\to 0$ along a subsequence of the even integers. It turns out that $A=68$ is this minimal value. 

\begin{prop} \label{prop:3} Let $A=68$. There exists an increasing sequence of even integers $\sigma(n)$ such that 
$$
\lim_{n\to +\infty} \vert S_{\sigma(n)}\vert^{1/\sigma(n)}= 
\lim_{n\to +\infty} \vert \widehat{S}_{\sigma(n)}\vert^{1/\sigma(n)}= e^{-\kappa}
$$
with $\kappa \approx 66.1727$, 
and 
$$
\lim_{n\to +\infty} \frac{\widehat{S}_{\sigma(n)}}{S_{\sigma(n)}}=-1.
$$
\end{prop}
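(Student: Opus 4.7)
My plan is to apply the saddle point method to the integral representations in Proposition~\ref{prop:2}. First, substituting $t\mapsto t+\tfrac12$ in \eqref{eq:2} and using $\cos(\pi(t+\tfrac12))^4=\sin(\pi t)^4$, the integrand for $\widehat{S}_n$ becomes exactly the integrand for $S_n$, but with $\cos(\pi t)^4$ replaced by $-\sin(\pi t)^4$ and with a horizontally shifted vertical contour (still inside the strip $\tfrac12<\operatorname{Re}(t)<n$). Using
\begin{align*}
\cos(\pi t)^4 &= \tfrac{3}{8}+\tfrac{1}{4}(e^{2i\pi t}+e^{-2i\pi t})+\tfrac{1}{16}(e^{4i\pi t}+e^{-4i\pi t}), \\
\sin(\pi t)^4 &= \tfrac{3}{8}-\tfrac{1}{4}(e^{2i\pi t}+e^{-2i\pi t})+\tfrac{1}{16}(e^{4i\pi t}+e^{-4i\pi t}),
\end{align*}
both integrals decompose into five pieces indexed by $k\in\{-2,-1,0,1,2\}$ and weighted by $e^{2ik\pi t}$; crucially the coefficient of $e^{\pm 4i\pi t}$ is $1/16$ in $S_n$ and $-1/16$ in $\widehat{S}_n$.

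Next, write $t=ns$ and apply Stirling's formula. A direct balance shows that the $\log n$ terms and the $s$-independent constants from the different gamma factors cancel, so the modulus of the integrand is asymptotic (up to a polynomial prefactor) to $e^{n\operatorname{Re}F_A(s)+O(\log n)}$ with
\[
F_A(s)=(A+3)\bigl(s\log s-(s+1)\log(s+1)\bigr)+6\bigl((s+2)\log(s+2)+(1-s)\log(1-s)\bigr)+18\log 2.
\]
The saddle equation for the $e^{2ik\pi t}$-piece reads $F_A'(s)=-2ik\pi$ and exponentiates, for every $k$, to the same algebraic equation
\[
\left(\frac{s}{s+1}\right)^{A+3}\left(\frac{s+2}{1-s}\right)^{6}=1,
\]
different $k$ selecting different branches of $\log$. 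For $A=68$ I would solve this equation numerically and select the root $s_0$ with $\operatorname{Im}(s_0)>0$ on the branch satisfying $F_A'(s_0)=-4i\pi$; I would then verify
\[
\kappa:=4\pi\operatorname{Im}(s_0)-\operatorname{Re}F_A(s_0)\approx 66.1727,
\]
and check that for $k\in\{-1,0,1\}$ the corresponding quantity $-2k\pi\operatorname{Im}(s_k)-\operatorname{Re}F_A(s_k)$ is strictly smaller, so that the $k=\pm 2$ saddles dominate. This numerical verification pins down $A=68$ as the minimal admissible value.

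Then I would deform the vertical contour through $s_0$ and its conjugate $\bar s_0$ along steepest-descent curves (analyticity in the strip legitimating the deformation) and apply the standard saddle point expansion to obtain
\[
S_n=\frac{2|C(s_0)|}{\sqrt n}\,e^{-n\kappa}\cos\bigl(n\theta+\arg C(s_0)\bigr)\bigl(1+o(1)\bigr),\quad\theta=\operatorname{Im}F_A(s_0)+4\pi\operatorname{Re}(s_0),
\]
together with the same expansion for $\widehat{S}_n$ carrying the extra global minus sign inherited from the $\cos^4\leftrightarrow\sin^4$ swap. Assuming $\theta/\pi$ is irrational (to be checked numerically), Weyl equidistribution supplies a positive-density subsequence $\sigma(n)$ of even integers along which $|\cos(\sigma(n)\theta+\arg C(s_0))|$ is bounded away from $0$; along this subsequence $|S_{\sigma(n)}|^{1/\sigma(n)}\to e^{-\kappa}$ and likewise for $\widehat{S}_{\sigma(n)}$, and the ratio $\widehat{S}_{\sigma(n)}/S_{\sigma(n)}\to -1$ follows at once from the identical leading amplitude and phase with opposite sign.

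The main obstacle will be twofold: (i) the global construction of the descent contour and the verification that away from $s_0,\bar s_0$ the integrand is uniformly smaller, and (ii) the delicate numerical comparison of the $k=\pm 2$ saddle with the $k\in\{-1,0,1\}$ saddles that singles out $A=68$. Both reduce to careful analysis of the level curves of $\operatorname{Re}(F_A(s)-2ik\pi s)$, but because the target decay rate $\kappa\approx 66.1727$ is so specific, these checks must be carried out explicitly.
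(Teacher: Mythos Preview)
Your proposal is correct and follows essentially the same route as the paper: apply Stirling to the integrals of Proposition~\ref{prop:2}, expand the trigonometric factor into exponentials, and show by the saddle point method that the $e^{\pm4i\pi t}$ pieces dominate and carry opposite signs in $S_n$ and $\widehat S_n$. Your preliminary substitution $t\mapsto t+\tfrac12$ is a tidy shortcut: it replaces the paper's two auxiliary functions $g$ and $\widehat g$ (whose relation $\widehat g(t_\ell)=\pm g(t_\ell)$ the paper reads off from the saddle equation) by the single dichotomy $\cos^4$ versus $-\sin^4$, making the origin of the ratio $-1$ transparent. One small slip to fix when you write it out: for the branch $F_A'(s_0)=-4i\pi$ the relevant saddle has $\operatorname{Im}(s_0)<0$ (the paper's $t_2\approx 1.0004-0.0007i$), so your formula for $\kappa$ should read $\kappa=-\operatorname{Re}F_A(s_0)+4\pi\operatorname{Im}(s_0)$ with that sign of $\operatorname{Im}(s_0)$, or equivalently use the conjugate saddle for $k=-2$.
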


\begin{proof} We let the integer $A\ge 15$ be unspecified for the moment, and $n$ is any integer $\ge 0$. 
We define $\log(z)=\ln\vert z\vert+i\arg(z)$, with $\vert \arg(z)\vert<\pi$. Stirling's formula reads:
$$
\Gamma(z)=z^z e^{-z}\sqrt{\frac{2\pi}z} \Big(1+\mathcal{O}\big(\frac1z\big)\Big)
$$
as $z\to \infty$ in any angular sector $\vert \arg(z)\vert\le \pi-\varepsilon$, and $\mathcal{O}$ depends on $\varepsilon>0$. Let us define  
\begin{multline*}
\quad \varphi_0(t):=(A+3)t\log(t)+(6t+12)\log(2t+4)\\
+(6-6t)\log(2-2t)-(A+3)(t+1)\log(t+1),\quad
\end{multline*}
where all the logarithms are defined with their principal determinations. This function is analytic in $\Omega:=\mathbb C\setminus \big((-\infty,0] \cup [1,+\infty)\big)$.

After changing $t$ to $tn$ in~\eqref{eq:1} and \eqref{eq:2} in Proposition \ref{prop:2}, we apply Stirling's formula to the various Gamma functions of the integrands and get
\begin{equation}\label{eq:3}
S_n= \frac{2i(2\pi)^{(A-9)/2}}{n^{(A+9)/2}}\int\limits_{c-i\infty}^{c+i\infty} g_n(t) f(t)^n \cos(\pi n t)^4 dt
\end{equation}
and 
\begin{equation}\label{eq:4}
\widehat{S}_n= 
\frac{-2i(2\pi)^{(A-9)/2}}{n^{(A+9)/2}}
\int\limits_{c-i\infty}^{c+i\infty} \widehat{g}_n(t) f(t)^n \cos(\pi n t)^4 dt
\end{equation}
where $c$ is any real number in $(0,1)$, and 
$$
f(t)=\exp(\varphi_0(t))=\frac{t^{t(A+3)}(2t+4)^{3(2t+4)}(2-2t)^{3(2-2t)}}{(t+1)^{(t+1)(A+3)}}
$$
is analytic in  $\Omega$. 
The functions $g_n(t)$ and $\widehat{g}_n(t)$ are analytic in $\Omega$ and such that as $n\to +\infty$
$$
g_n(t)=g(t)\Big(1+\mathcal{O}\Big(\frac 1n\Big)\Big),
$$
$$
\widehat{g}_n(t)=g(t)\frac{(t+1)^{(A+3)/2}(2-2t)^3}{t^{(A+3)/2}(2t+4)^3}\Big(1+\mathcal{O}\Big(\frac 1n\Big)\Big).
$$
with 
$$
g(t)=\frac{(2t+1)(2t+4)^{3/2}(2-2t)^{3/2}}
{t^{(A+3)/2}(t+1)^{(A+3)/2}}.
$$
For any $\varepsilon>0$, when $t\in \{\vert\arg(t)\vert\le \pi-\varepsilon \} \cap \{\vert\arg(1-t)\vert\le \pi-\varepsilon \}$,  the constants in the  $\mathcal{O}(\frac 1n)$ depends on $\varepsilon$ and not on $t$.  

\medskip

We shall now apply the saddle point method to estimate each integral in \eqref{eq:3} and \eqref{eq:4}. Since 
$$
\cos(x)^4= \frac38\big(\cos(4x) + 4\cos(2x) + 3\big),
$$
we have
\begin{equation}\label{eq:16}
S_n=\frac{2i(2\pi)^{(A-9)/2}}{n^{(A+9)/2}}\sum_{\ell=-2}^2 u_\ell J_{\ell,n} \quad
\textup{and} \quad
\widehat{S}_n=-\frac{2i(2\pi)^{(A-9)/2}}{n^{(A+9)/2}}\sum_{\ell=-2}^2 u_\ell \widehat{J}_{\ell,n}
\end{equation}
where
$$
J_{\ell,n} = \int\limits_{c-i\infty}^{c+i\infty} g_n(t) f(t)^n e^{2\ell i\pi n t}dt \quad
\textup{and} \quad
\widehat{J}_{\ell,n} = \int\limits_{c-i\infty}^{c+i\infty} \widehat{g}_n(t) f(t)^n e^{2\ell i\pi n t}dt,
$$
and $u_{-4}=u_4=\frac{3}{16}$, $u_{-2}=u_2=\frac{3}{4}$ and $u_0=\frac{9}{16}$.

Observe that $\overline{J_{\ell,n}}=-J_{-\ell,n}$ and $\overline{\widehat{J}_{\ell,n}}=-\widehat{J}_{-\ell,n}$.

\medskip

We now give the details necessary to complete the proof when $A=68$. For the statement of the saddle point method, see \cite[Lemma 3]{ri2}.

\medskip

\noindent $\bullet$ Case $\ell=0$. 
 The equation $\varphi_0'(t)=0$ admits a solution
$t_0\approx 0.9991\in(0,1)$
and we have $g(t_0)=\widehat{g}(t_0)\neq 0$ and $\varphi_0''(t_0)\neq 0$ (see below). The real function $f_0(y):=\textup{Re}(\varphi_0)(t_0+iy)$ admits a unique maximum at $y=0$; this is proved by studying the sign of 
\begin{align}
f_0'(y)&=-\textup{Im}(\varphi_0')(t_0+iy) \notag
\\
&=-71\arg(t_0+iy)-6\arg(2t_0+2iy+4) \notag
\\&\qquad \qquad
+6\arg(2-2t_0-2iy)+71\arg(1+t_0+iy). \label{eq:13}
\end{align}
This is done by the same method as in \cite[Lemma 4]{ri2},  using
$\arg(z)=\arctan(\textup{Im}(z)/\textup{Re}(z))$ when $\textup{Re}(z)>0$ (which is the case of all the various values of $z$ in \eqref{eq:13}). Hence, shifting the line $c+i\mathbb R$ to the line $t_0+i\mathbb R$ (oriented from $\textup{Im}(t)<0$ to $\textup{Im}(t)>0$), we can apply the saddle point method to $J_{0,n}$ and $\widehat{J}_{0,n}$, and get
$$
J_{0,n}= ig(t_0) \sqrt{\frac{2\pi}{n\varphi_0''(t_0)} } f(t_0)^n \big(1+o(1)\big) \quad \textup{and} \quad 
\widehat{J}_{0,n}= i \widehat{g}(t_0) \sqrt{\frac{2\pi}{n\varphi_0''(t_0)} } f(t_0)^n \big(1+o(1)\big).
$$
Numerically, 
$$
t_0\approx 0.99918,
\quad
f(t_0)\approx 1.8127 \times 10^{-29}  
$$
$$
g(t_0)=\frac{(2t_0+1)(2t_0+4)^{3/2}(2-2t_0)^{3/2}}
{t_0^{71/2}(t_0+1)^{71/2}}\approx 6.2647 \times 10^{-14}
$$
$$
\widehat{g}(t_0)=g(t_0)\frac{(t_0+1)^{71/2}(2-2t_0)^3}{t_0^{71/2}(2t_0+4)^3}=g(t_0), \quad \varphi_0''(t_0)\approx 7373.2123
$$

\noindent $\bullet$ Case $\ell=1$. Let us define  $\varphi_1(t):=\varphi_0(t)+2i\pi t$. The equation $\varphi_1'(t)=0$ admits a solution 
$t_1\approx 0.9995-i0.0007$.
We have $g(t_1)=-\widehat{g}(t_1)\neq 0$ and $\varphi_1''(t_1)=\varphi_0''(t_1)\neq 0$ (see below).
The real function $f_1(y):=\textup{Re}(\varphi_1)(t_1+iy)$ admits a unique maximum at $y=0$; again, this is proved by studying the sign of 
\begin{align*}
f_1'(y)&=-\textup{Im}(\varphi_1')(t_0+iy)=-\textup{Im}(\varphi_0')(t_0+iy)-2\pi
\end{align*}
as in \cite[Lemma 4]{ri2}. Hence, shifting the line $c+i\mathbb R$ to the line $t_1+i\mathbb R$ (oriented from $\textup{Im}(t)<0$ to $\textup{Im}(t)>0$), we can apply the saddle point method to $J_{1,n}$ and $\widehat{J}_{1,n}$, and get
$$
J_{1,n}= ig(t_1) \sqrt{\frac{2\pi}{n\varphi_0''(t_1)} } f(t_1)^n \big(1+o(1)\big)
\quad \textup{and} \quad 
\widehat{J}_{1,n}= i \widehat{g}(t_1) \sqrt{\frac{2\pi}{n\varphi_0''(t_1)} } f(t_1)^n \big(1+o(1)\big).
$$
Numerically,
$$
t_1\approx 0.9995-i0.0007, 
$$
$$f(t_1)e^{2i\pi t_1}\approx 1.8171 \times 10^{-29}-i7.7425 \times 10^{-32}
$$
$$
g(t_1)=\frac{(2t_1+1)(2t_1+4)^{3/2}(2-2t_1)^{3/2}}
{t_1^{71/2}(t_1+1)^{71/2}}\approx -i1.8629 \times 10^{-15}+6.1544 \times 10^{-14}
$$
$$
\widehat{g}(t_1)=g(t_1)\frac{(t_1+1)^{71/2}(2-2t_1)^3}{t_1^{71/2}(2t_1+4)^3}=-g(t_1)
$$
$$\varphi_0''(t_1)\approx 3724.1063-i6320.4884
$$

\noindent $\bullet$ Case $\ell=-1$.  We have 
$$
J_{-1,n}= -\overline{J_{1,n}}=i\overline{g(t_{1})} \sqrt{\frac{2\pi}{n\overline{\varphi_{0}''(t_{1})}} } \overline{f(t_{1})}^n \big(1+o(1)\big)
$$
and 
$$
\widehat{J}_{1,n}=-\overline{\widehat{J}_{1,n}} =i \overline{\widehat{g}(t_{1})} \sqrt{\frac{2\pi}{n\overline{\varphi_{0}''(t_{1})}} } \overline{f(t_{1})}^n \big(1+o(1)\big).
$$

\noindent $\bullet$ Case $\ell=2$. Let us define  $\varphi_2(t):=\varphi_0(t)+4i\pi t$. The equation $\varphi_2'(t)=0$ admits a solution 
$t_2\approx 1.0004-i0.0007$. 
We have $g(t_2)=\widehat{g}(t_2)\neq 0$ and $\varphi_2''(t_2)=\varphi_0''(t_2)\neq 0$ (see below). 
The situation is a different here because $\textup{Re}(t_2)>1$ and thus the straightline $t_2+i\mathbb R$ is not contained in $\Omega$. Instead, we define the polygonal contour $L:=\{t_2+y, y\le 0\}\cup [t_2,1] \cup \{1+iy, y\ge 0\}$. One can then prove that when $t$ varies in $L$, $\textup{Re}(\varphi_2)(t) $ has a unique maximum at $t=t_2$; see the proof of \cite[Lemma 10]{firi} or \cite{zud3} for similar computations. We now replace the ``corner'' at $t=1$ by an arc of circle centered at $1$ and of small radius $\eta>0$ (and contained in $\Omega$), and connect it to the remaining parts of $[t_2,1]$ and $\{1+iy, y\ge 0\}$: we obtain a path $L'$ along which $\textup{Re}(\varphi_2)(t) $ still has a unique maximum at $t=t_2$ provided $\eta$ is small enough, and $L'$ is at positive distance of $(-\infty,0] \cup [1,+\infty)$.
Hence, moving the line $c+i\mathbb R$ to $L'$ (oriented from $\textup{Im}(t)<0$ to $\textup{Im}(t)>0$) we can apply the saddle point method to $J_{2,n}$ and $\widehat{J}_{2,n}$, and get
$$
J_{2,n}= ig(t_2) \sqrt{\frac{2\pi}{n\varphi_0''(t_2)} } f(t_2)^n \big(1+o(1)\big)\quad \textup{and} \quad 
\widehat{J}_{2,n}= i \widehat{g}(t_2) \sqrt{\frac{2\pi}{n\varphi_0''(t_2)} } f(t_2)^n \big(1+o(1)\big).
$$
Numerically,
$$
t_2\approx 1.0004-i0.0007,
$$
$$f(t_2)e^{4i\pi t_2}\approx 1.8261 \times 10^{-29}-i7.8209 \times 10^{-32}
$$
$$
g(t_2)=\frac{(2t_2+1)(2t_2+4)^{3/2}(2-2t_2)^{3/2}}
{t_2^{71/2}(t_2+1)^{71/2}}\approx -5.9420 \times 10^{-14}-i1.8161 \times 10^{-15}
$$
$$
\widehat{g}(t_2)=g(t_2)\frac{(t_2+1)^{71/2}(2-2t_2)^3}{t_2^{71/2}(2t_2+4)^3}=g(t_2)
$$
$$
\varphi_0''(t_2)\approx -3574.1082-i6320.4861
$$

\noindent $\bullet$ Case $\ell=-2$. We have 
$$
J_{-2,n}= -\overline{J_{2,n}}=i\overline{g(t_{2})} \sqrt{\frac{2\pi}{n\overline{\varphi_{0}''(t_{2})}} } \overline{f(t_{2})}^n \big(1+o(1)\big)
$$
and 
$$
\widehat{J}_{-2,n}= -\overline{\widehat{J}_{2,n}}=i \overline{\widehat{g}(t_{2})} \sqrt{\frac{2\pi}{n\overline{\varphi_{0}''(t_{2})}} } \overline{f(t_{2})}^n \big(1+o(1)\big).
$$

We now observe that $\vert f(t_2)e^{4i\pi t_2}\vert > \vert f(t_1)e^{2i\pi t_1}\vert > \vert f(t_0)\vert$. By an argument similar to \cite[Lemma 5]{ri2},  we deduce from \eqref{eq:16} and the above asymptotic estimates for $J_{2,n}$, $J_{-2,n}$, $\widehat{J}_{2,n}$ and $\widehat{J}_{-2,n}$ the existence of an increasing sequence of even integers $\sigma(n)$ such that 
$$
\lim_{n\to +\infty} \vert S_{\sigma(n)}\vert^{1/\sigma(n)}= \vert f(t_2)e^{4i\pi t_2}\vert, \quad 
\lim_{n\to +\infty} \vert \widehat{S}_{\sigma(n)}\vert^{1/\sigma(n)}= \vert f(t_2)e^{4i\pi t_2}\vert
$$
and moreover
$$
\lim_{n\to +\infty} \frac{
\widehat{S}_{\sigma(n)}}{S_{\sigma(n)}}=-1.
$$
We conclude the proof with the estimate 
$
\vert f(t_2)e^{4i\pi t_2}\vert \approx \exp(-66.1727).
$
\end{proof}

\section{Proof of Theorem \ref{theo:1}}\label{sec:5}
We first remark that the asymptotic behaviors of $d_n$ and of $\Phi_n$  both follow from the prime number theorem:
$$
\lim_{n\to\infty}\frac{\log(d_n)}n=1
\quad\text{and}\quad
\delta:=\lim_{n\to\infty}\frac{\log(\Phi_n)}n=\int_0^1\rho_0(t)\,d(\psi(t)+t^{-1})
\approx 1.29564, 
$$
where $\psi(t):=\Gamma'(t)/\Gamma(t)$ and the function $\rho_0(t)$ is defined in the proof of Proposition \ref{prop:1}.

We now take $A=68$ in Proposition \ref{prop:1}. 
For any odd integer $m\in \{5,7,\ldots, 69\}$ and any even integer $n$, we have
$$
\Phi_n^{-3}d_n^{70}\big((2^m-1)S_n-\widehat{S}_n\big)=Q_{0,n}+\sum_{\stackrel{j=5, j\neq m}{j\, \textup{odd}}}^{69} Q_{j,n} \zeta(j).
$$
with $Q_{0,n} := \Phi_n^{-3}d_n^{70}\big((2^m-1)q_{0,n}-\widehat{q}_{0,n}\big) \in \mathbb Z$ and $Q_{j,n}=\Phi_n^{-3}d_n^{70}(2^m-2^j)q_{j,n} \in \mathbb Z$.

By Proposition \ref{prop:3}, we have  
\begin{align*}
\lim_{n\to +\infty} &\Big\vert \Phi_{\sigma(n)}^{-3}d_{\sigma(n)}^{70}\big((2^m-1)S_{\sigma(n)}-\widehat{S}_{\sigma(n)}\big) \Big\vert^{1/\sigma(n)}\\
&=\lim_{n\to +\infty} \big\vert \Phi_{\sigma(n)}^{-3}d_{\sigma(n)}^{70}S_{\sigma(n)} \big\vert^{1/\sigma(n)} \times\lim_{n\to +\infty} \bigg\vert (2^m-1)-\frac{\widehat{S}_{\sigma(n)}}{S_{\sigma(n)}} \bigg\vert^{1/\sigma(n)} 
\\&
= e^{70-\kappa-3\delta} \approx e^{-0.0597} \in (0,1).
\end{align*}
This proves the existence of at least one irrational number amongst the odd zeta values $\zeta(j)$ with $j$ odd in $\{5,7, \ldots, 69\}\setminus\{m\}$.

\def\refname{Bibliography}

\bigskip

\noindent Tanguy Rivoal,  Institut Fourier, CNRS et Universit\'e Grenoble Alpes,  CS 40700, 38058 Grenoble cedex 9, France

\medskip
\noindent tanguy.rivoal [\&] univ-grenoble-alpes.fr

\bigskip 

\noindent Wadim Zudilin, Institute for Mathematics, Astrophysics and Particle Physics,
Radboud Universiteit, PO Box 9010,
6500~GL Nijmegen, The Netherlands

\medskip
\noindent w.zudilin [\&] math.ru.nl

\bigskip
\noindent 2010 \emph{Mathematics Subject Classification}. 11J72, 11M06, 33C20.

\end{document}